\numberwithin{equation}{section}
\newtheorem{theorem}{Theorem}[section] 
\newtheorem{proposition}[theorem]{Proposition}
\newtheorem{lemma}[theorem]{Lemma} 
\theoremstyle{definition}
\newtheorem{definition}[theorem]{Definition}
\def\ZZ{\mathbb{Z}}
\begin{document}

\title{Reflection group relations arising from cluster algebras}

\author{Ahmet I. Seven}

\address{Middle East Technical University, Department of Mathematics, 06800, Ankara, Turkey}
\email{aseven@metu.edu.tr}


\thanks{The author's research was supported in part by the Scientific and Technological Research Council of Turkey (TUBITAK)}

\date{April 04, 2014}

\begin{abstract}
There is a well-known analogy between cluster algebras and Kac-moody algebras: roughly speaking, Kac-Moody algebras are associated with symmetrizable generalized Cartan matrices while cluster algebras correspond to skew-symmetrizable matrices. In this paper, we study an interplay between these two classes of matrices. We obtain relations in the Weyl groups of Kac-Moody algebras that come from mutation classes of skew-symmetrizable matrices. More precisely, we establish a set of relations satisfied by the reflections of the so-called companion bases; these include c-vectors, which parametrize coefficients in a cluster algebra with principal coefficients. These relations generalize the relations obtained by Barot and Marsh for finite type. For affine types, we also show that the reflections of the companion bases satisfy the relations obtained by Felikson and Tumarkin.  As an application, we obtain some combinatorial properties of the mutation classes of skew-symmetrizable matrices.   

\end{abstract}

\maketitle

\section{Introduction}
\label{sec:intro}

There is a well-known analogy between cluster algebras and Kac-moody algebras: roughly speaking, Kac-Moody algebras are associated with symmetrizable generalized Cartan matrices while cluster algebras correspond to skew-symmetrizable matrices. In this paper, we study an interplay between these two classes of matrices. We obtain relations in the Weyl groups of Kac-Moody algebras that come from mutation classes of skew-symmetrizable matrices. More precisely, we establish a set of relations satisfied by the reflections of the so-called companion bases; these include c-vectors, which parametrize coefficients in a cluster algebra with principal coefficients. These relations generalize the relations obtained by Barot and Marsh for finite type. For affine types, we also show that the reflections of the companion bases satisfy the relations obtained by Felikson and Tumarkin.  As an application, we obtain some combinatorial properties of the mutation classes of skew-symmetrizable matrices.


To state our results, we need some terminology. Let us recall that an integer matrix $B$ is skew-symmetrizable if $DB$ is skew-symmetric for some diagonal matrix $D$ with positive diagonal entries. 
For any matrix index $k$, the mutation of a skew-symmetrizable matrix $B$ at $k$ is another skew-symmetrizable matrix $\mu_k(B)=B'$: 
\begin{displaymath}
B' = \left\{ \begin{array}{ll}
B'_{i,j}=-B_{i,j} & \textrm{if $i=k$ or $j=k$}\\
B'_{i,j}=B_{i,j}+sgn(B_{i,k})[B_{i,k}B_{k,j}]_+ & \textrm{else}
\end{array} \right.
\end{displaymath}
(where we use the notation $[x]_+=max\{x,0\}$ and $sgn(x)=x/|x|$ with $sgn(0)=0$). 
Mutation is an involutive operation, so repeated mutations give rise to the \emph{mutation-equivalence} relation on skew-symmetrizable matrices. 
On the other hand, for any skew-symmetrizable $n\times n$ matrix $B$, a directed graph $\Gamma (B)$, called the diagram of $B$, is associated as follows: the vertices of $\Gamma (B)$ are the indices $1,2,...,n$ such that there is a directed edge from $i$ to $j$ if and only if $B_{ji} > 0$, and this edge is assigned the weight $|B_{ij}B_{ji}|\,$. Then the mutation $\mu_k$ can naturally be viewed as a transformation on diagrams (see Section~\ref{sec:pre} for a description). Note also that if $B$ is skew-symmetric then the diagram $\Gamma(B)$ may be viewed as a quiver and the corresponding mutation operation is also called quiver mutation.

Given the appearance of the mutation operation in many different areas of mathematics, it is natural to study properties of the mutation classes of skew-symmetrizable matrices and the associated diagrams. In this paper, we consider mutation classes of skew-symmetrizable matrices with an acyclic diagram; here a diagram is called acyclic if it does not have any oriented cycles at all. To study such mutation classes, a useful notion has been the notion of a \emph{quasi-Cartan companion}. More precisely a quasi-Cartan companion of $B$ is a symmetrizable matrix $A$ whose diagonal entries are equal to $2$ and whose off-diagonal entries differ from the corresponding entries of $B$ only by signs \cite{BGZ}. We will use a specific type of quasi-Cartan companions, which carry information about the the corresponding diagram.
To be more explicit, let us recall that, for a skew-symmetrizable matrix $B$, by a {subdiagram} of $\Gamma(B)$ we always mean a diagram obtained from $\Gamma(B)$ by taking an induced (full) directed subgraph on a subset of vertices and keeping all its edge weights the same as in $\Gamma(B)$. By a cycle in $\Gamma(B)$ we mean a subdiagram whose vertices can be labeled by elements of $\ZZ/m\ZZ$ so that the edges betweeen them are precisely $\{i,i+1\}$ for $i \in  \ZZ/m\ZZ$. We call a quasi-Cartan companion $A$ of a skew-symmetrizable matrix $B$ \emph{admissible} if it satisfies the following sign condition: for any cycle $Z$ in $\Gamma(B)$, the product $\prod _{\{i,j\}\in Z}(-A_{i,j})$ over all edges of $Z$ is negative if $Z$ is oriented and positive if $Z$ is non-oriented \cite{S3}. The main examples of admissible companions are the generalized Cartan matrices: if $\Gamma(B)$ is acyclic, i.e. has no oriented cycles at all, then the quasi-Cartan companion $A$ with $A_{i,j}= -|B_{i,j}|$, for all $i\ne j$, is admissible, furthermore (for $B$ with an acyclic diagram) 
\begin{equation}
\label{eq:sign-coherence}
\text{any matrix in the mutation class of $ B $ has an admissible quasi-Cartan companion.}
\end{equation}
This property has been shown in \cite{S6} for skew-symmetric matrices; for general skew-symmetrizable matrices it is conjectural. 

The admissible quasi-Cartan companions can be considered as Gram matrices of a bilinear form with respect to particular bases. To be more explicit, 
let us also recall that, for a skew-symmetrizable $B$ with an acylic diagram $\Gamma(B)$ and the corresponding generalized Cartan matrix $A$ with $A_{i,i}=2$ and $A_{i,j}=-|B_{i,j}|$ for $i\ne j$, there is an associated root system in the root lattice spanned by the simple roots $\alpha_i$  \cite{K}. For each simple root $\alpha_i$, the corresponding reflection $s_{\alpha_i}=s_i$ is the linear isomorphism defined on the basis of simple roots as $s_i(\alpha_j)=\alpha_j-A_{i,j}\alpha_i$. The group $W$ generated by these (simple) reflections is called the Weyl group of the root system. 
Then the real roots are defined as the vectors obtained from the simple roots by the action of $W$, i.e. by a sequence of reflections. It is well known that the coordinates of a real root with respect to the basis of simple roots are either all nonnegative or all nonpositive, see \cite{K} for details. For each real root $\alpha=w(\alpha_i)$, where $w\in W$, there is a corresponding reflection $s_\alpha=ws_iw^{-1}$. Following \cite{P,BM}, for a skew-symmetrizable matrix $B'$ which is mutation-equivalent to $B$ with a quasi-Cartan companion $A'$, we call a basis $\mathcal{B}=\{\beta_1,...,\beta_n\}$ of the root lattice a \emph{companion basis} for $ A' $ (or for $ B $) if each $\beta_i$ is a real root and $A'=(<\beta_j,{\beta}^{\vee}_i>)$,  the matrix of the pairings between the roots and the coroots. 
Under assumption \eqref{eq:sign-coherence}, each admissible quasi-Cartan companion of $B'$ has a companion basis; it can be obtained from the basis of simple roots by a sequence of mutations (\cite[Theorem 1.2]{S6}; see also Section \ref{sec:pre}). 

For a skew-symmetrizable matrix $B$,  we define, for vertices $i,j$ of $\Gamma(B)$, the following:
$$m_{ij}=
\begin{cases}
2 & \text{if $i$ and $j$ are not connected;} \\
3 & \text{if $i$ and $j$ are connected by an edge of weight $1$;} \\
4 & \text{if $i$ and $j$ are connected by an edge of weight $2$;} \\
6 & \text{if $i$ and $j$ are connected by an edge of weight $3$.} \\
\infty & \text{if $i$ and $j$ are connected by an edge of weight $\geq 4$.}
\end{cases}
$$
It is well-known that, if $\Gamma(B)$ is acyclic, then for the corresponding simple reflections $s_i$,  the order of $s_is_j$ in $W$ is equal to $m_{ij}$.
Our first main result gives relations satisfied by the reflections of a companion basis for $B$ whose diagram is not acyclic:



\begin{theorem}\label{th:invariance}
Suppose that $B_0$ is a skew-symmetrizable matrix whose diagram is acyclic and let $ W $ be the associated Weyl group. 
Suppose also that $B$ is mutation-equivalent to $B_0$ and let $A$ be an admissible quasi-Cartan companion of $B$. Let $\mathcal{B}=\{\beta_1,...,\beta_n\}$ be a companion basis for $A$. 
Then, under assumption \eqref{eq:sign-coherence}, the reflections $\{t_i:=s_{\beta_i}, i=1,...,n\}$ generate $W$ and satisfy the following relations:


\begin{enumerate}

\item[(i)]
$t_i^2=e$ for any vertex $i$ 

\item[(ii)]
For any two vertices $i,j$ in $\Gamma(B)$, $(t_it_j)^{m_{ij}}=e$ 

\item[(iii)] For any oriented cycle $C=\{1,...,d\}$ in $\Gamma(B)$ with 
$$1\xrightarrow{w_1} 2\xrightarrow{w_2} \cdots i-1\xrightarrow{w_{i-1}} i\xrightarrow{w_i} i+1\xrightarrow{w_{i+1}} \cdots \rightarrow {d}\xrightarrow{w_d} 1,$$
and any vertex $i$ in $C$ we have, 
writing $t_k=t_l$ if $k\equiv l$ $\mathrm{mod}$ $d$,
$$(t_it_{i+1}...t_{i+d-1}t_{i+d-2}...t_{i+1})^m=e$$ 
where, writing $q_j=\sqrt{w_j}$ and 
$x=(q_{i}...q_{i+d-2}-q_{i+d-1})^2$ with $q_k=q_l$ if $k\equiv l$ $\mathrm{mod}$ $d$, 
the number $m$ is the following: if $x=0$, then $m=2$; if $x=1$, then $m=3$; if $x=2$, then $m=4$; if $x=3$, then $m=6$; if $x\geq 4$, then $m=\infty$.


\end{enumerate}


\end{theorem}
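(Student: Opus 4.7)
The plan is to establish the three relations in order, using that (by Theorem~1.2 of \cite{S6}) the companion basis $\mathcal{B}$ is obtained from the basis of simple roots of $B_0$ by an explicit sequence of mutations, each of which conjugates basis vectors by reflections in $W$. Consequently every $t_i=s_{\beta_i}$ is a $W$-conjugate of a simple reflection; this immediately gives that $\{t_1,\ldots,t_n\}$ generates $W$ and that relation~(i) holds, since reflections are involutions.

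For (ii), the subgroup $\langle t_i,t_j\rangle\subset W$ acts on the plane spanned by $\beta_i$ and $\beta_j$ as a dihedral group whose order is determined by the product $\langle\beta_j,\beta_i^\vee\rangle\langle\beta_i,\beta_j^\vee\rangle=A_{ij}A_{ji}$. In terms of the standard symmetric bilinear form $(\cdot,\cdot)$ on the root lattice this equals $4(\beta_i,\beta_j)^2/((\beta_i,\beta_i)(\beta_j,\beta_j))\geq 0$, and therefore equals the edge weight $|B_{ij}B_{ji}|$. The standard rank-two dictionary $0,1,2,3,\geq 4\mapsto 2,3,4,6,\infty$ then yields $(t_it_j)^{m_{ij}}=e$.

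The substantive part is (iii). With $u=t_{i+1}t_{i+2}\cdots t_{i+d-2}$, the word in the statement factors as $t_i\cdot(u\,t_{i+d-1}\,u^{-1})=t_i\cdot s_\gamma$ where $\gamma=u(\beta_{i+d-1})$ is a real root; by the rank-two classification used for (ii), it suffices to show $\langle\gamma,\beta_i^\vee\rangle\langle\beta_i,\gamma^\vee\rangle=x$. Since $C$ is chordless in $\Gamma(B)$, one has $A_{jk}=0$ whenever $j,k\in C$ are not adjacent in $C$. Applying $t_{i+d-2},t_{i+d-3},\ldots,t_{i+1}$ successively to $\beta_{i+d-1}$ and writing $a_j:=A_{j,j+1}$, an easy induction gives
\begin{equation*}
\gamma=\beta_{i+d-1}+\sum_{k=i+1}^{i+d-2}(-1)^{i+d-1-k}\,a_k a_{k+1}\cdots a_{i+d-2}\,\beta_k.
\end{equation*}
Pairing with $\beta_i$, which in $C$ is adjacent only to $\beta_{i+1}$ and $\beta_{i+d-1}$, leaves only two nonzero contributions and gives a two-term expression for $(\beta_i,\gamma)$. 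Substituting this into $4(\beta_i,\gamma)^2/((\beta_i,\beta_i)(\beta_{i+d-1},\beta_{i+d-1}))$ and using the symmetrizability identity $d_j a_j=d_{j+1}b_j$ (with $b_j:=A_{j+1,j}$) to cancel all squared-length ratios, what remains simplifies via the cyclic identity
\begin{equation*}
\prod_{j=i}^{i+d-1} a_j = (-1)^{d+1}\,q_i q_{i+1}\cdots q_{i+d-1},
\end{equation*}
which follows from skew-symmetrizability (forcing $|\prod a_j|=\prod q_j$, since $\prod a_j$ and $\prod b_j$ agree in absolute value) together with admissibility on the oriented cycle $C$ (fixing the overall sign). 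The cross term then becomes $-2\,q_iq_{i+1}\cdots q_{i+d-1}$, the pure $\beta_{i+d-1}$-contribution gives $q_{i+d-1}^2$, and the $\beta_{i+1}$-contribution gives $(q_i\cdots q_{i+d-2})^2$, assembling to
\begin{equation*}
\langle\gamma,\beta_i^\vee\rangle\langle\beta_i,\gamma^\vee\rangle=(q_i q_{i+1}\cdots q_{i+d-2}-q_{i+d-1})^2=x,
\end{equation*}
as required.

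I expect the main obstacle to be precisely this sign and weight bookkeeping: combining admissibility, skew-symmetrizability, and the chordless structure of $C$ into the clean cyclic product identity above, and then verifying that the apparently asymmetric recursive formula for $\gamma$ collapses, after pairing with $\beta_i$, into the symmetric square $(q_i\cdots q_{i+d-2}-q_{i+d-1})^2$.
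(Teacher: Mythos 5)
Your argument is correct in substance, but for part (iii) it takes a genuinely different route from the paper. The paper collapses the cycle by actually mutating: it sets $B'=\mu_2\cdots\mu_{d-1}(B)$ and $\mathcal{B}'=\mu_2^{\epsilon}\cdots\mu_{d-1}^{\epsilon}(\mathcal{B})$, observes that $s_{\beta'_d}$ is exactly the conjugate $t_2\cdots t_{d-1}t_dt_{d-1}\cdots t_2$ while $\beta'_1=\beta_1$, reads off the weight of the new edge $\{1,d\}$ as $(q_1\cdots q_{d-1}-q_d)^2$ from the diagram-mutation rule, and then feeds this into the same rank-two lemma (Lemma~\ref{lem:edge2}) that you invoke. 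You instead expand $\gamma=u(\beta_{i+d-1})$ explicitly in the root lattice and compute $(\beta_i,\gamma)$ by hand. The bookkeeping you flag as the main obstacle does close up: the recursion for $\gamma$ is correct on a chordless induced cycle, the ratios $d_{j+1}/d_j$ telescope around the cycle so that the two square terms become $q_{i+d-1}^2$ and $(q_i\cdots q_{i+d-2})^2$, and admissibility pins the sign of $\prod a_j$ to $(-1)^{d+1}$, making the cross term $-2q_i\cdots q_{i+d-1}$. So your computation is a valid, self-contained substitute for the paper's appeal to the mutation formula; the paper's route is shorter because the mutated edge weight is already known, while yours is more elementary (it never leaves the root lattice) and makes explicit exactly where admissibility enters. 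Parts (i), (ii) and the reduction of (iii) to the rank-two order computation coincide with the paper's treatment.

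One point to tighten: the inference ``every $t_i$ is a $W$-conjugate of a simple reflection, hence the $t_i$ generate $W$'' is not valid on its own --- arbitrary reflections need not generate $W$. What actually yields generation, and what the paper says, is that at each mutation step the new basis reflections are conjugates of the old ones \emph{by reflections belonging to the old basis}, so each step preserves the generated subgroup; iterating from the simple system gives $\langle t_1,\dots,t_n\rangle=W$. Your first sentence already contains this fact, but the conclusion should be drawn from it rather than from conjugacy alone.
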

\noindent
Let us note that these relations generalize those obtained in \cite{BM} for finite type. It has also been shown in \cite{BM} that, for finite type, these relations give a presentation of the corresponding finite Weyl group. 

We will establish a set of relations on more than one cycle as follows:

\begin{theorem}\label{th:invariance2}

In the set-up of Theorem~\ref{th:invariance},  let $i\xrightarrow{a} j$ be an edge in $\Gamma=\Gamma(B)$ and $p_1,...,p_r$ be an ordered set of directed paths from $j$ to $i$ such that
$$p_k: j\xrightarrow{w^k_0} j^k_1 \xrightarrow{w^k_1} \cdots  \rightarrow {j^k_{n_k}}\xrightarrow{w^k_{n_k}} i$$ and $\{j, j^k_1, \cdots,  {j^k_{n_k}}, i\}$ is an oriented (full, induced) cycle in $\Gamma$, for $ k=1,...,r $.
We assume that $p_k$'s are disjoint except at the end vertices $i,j$ (i.e. if $k\ne l$ then $j^k_x \ne j^l_y$; note that vertices on different $p_k$ may be connected to each other).
For each $k$, we denote $t^{(k)}=t_{j^k_1}  \cdots  t_{j^k_{n_k}}$; we denote the inverse of $t^{(k)}$ by $t^{(-k)}$.

Let $P$ denote the subdiagram on $p_1,...,p_r$. We say that a path $$q: j\xrightarrow{} j^{k_1}_{1}\leftrightarrow{} j^{k_2}_{l_2}\leftrightarrow{} \cdots  \leftrightarrow {j^{k_s}_{n_{k_s}}}\xrightarrow{} i$$ connecting $j$ to $i$ in $P$ is 
\emph{increasing} if we have $ 1\leq k_1\leq k_2 \leq .... \leq k_s \leq r $. 

Let $x=(\sqrt{a}-\sum \sqrt{w(q)})^2 $, where the sum is over all increasing paths $q$ from $j$ to $i$ in $P$.  Then 
$$(t_jt^{(1)}...t^{(r)}t_{i}t^{(-r)}...t^{(-1)})^m=e$$
where the number $m$ is the following: if $x=0$, then $m=2$; if $x=1$, then $m=3$; if $x=2$, then $m=4$; if $x=3$, then $m=6$; if $x\geq 4$, then $m=\infty$.



\end{theorem}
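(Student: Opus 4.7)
The plan is to recognize the product
$$\omega \;:=\; t_j\,t^{(1)}\cdots t^{(r)}\,t_i\,t^{(-r)}\cdots t^{(-1)}$$
as a product of exactly two reflections. Setting $g = t^{(1)}\cdots t^{(r)}\in W$ (so $g^{-1}=t^{(-r)}\cdots t^{(-1)}$), and using $g\,t_i\,g^{-1}=s_{g(\beta_i)}$, we have
$$\omega \;=\; s_{\beta_j}\,s_{g(\beta_i)}.$$
Both $\beta_j$ and $g(\beta_i)$ are real roots, and a standard Kac--Moody fact asserts that the order of a product of two real-root reflections $s_\alpha s_\beta$ equals $2,3,4,6$ or $\infty$ according as the nonnegative integer
$$N(\alpha,\beta) \;:=\; \langle\alpha,\beta^{\vee}\rangle\,\langle\beta,\alpha^{\vee}\rangle$$
takes the value $0,1,2,3$ or $\ge 4$. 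The theorem thus reduces to the numerical identity $N(\beta_j,g(\beta_i))=x$.

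To compute this pairing, I pass to the $W$-invariant symmetric bilinear form $(\cdot,\cdot)$ on the root lattice, normalized so that $(\beta_\ell,\beta_\ell)=2d_\ell$ and $(\beta_u,\beta_v)=d_u A_{u,v}=d_v A_{v,u}$, where $D=\mathrm{diag}(d_\ell)$ is the symmetrizer of $A$. Invariance $(g\beta_i,g\beta_i)=(\beta_i,\beta_i)$ yields
$$N(\beta_j,g(\beta_i)) \;=\; \frac{4\,(\beta_j,g(\beta_i))^{2}}{(\beta_j,\beta_j)(\beta_i,\beta_i)},$$
so it suffices to establish
$$\frac{2\,(\beta_j,g(\beta_i))}{\sqrt{(\beta_j,\beta_j)(\beta_i,\beta_i)}} \;=\; \pm\Bigl(\sqrt{a}-\sum_{q}\sqrt{w(q)}\Bigr),$$
with the sum over increasing paths $q$ from $j$ to $i$ in $P$.

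The central combinatorial step expands $(\beta_j,g(\beta_i))=(g^{-1}\beta_j,\beta_i)$. Writing $g^{-1}=t^{(-r)}\cdots t^{(-1)}$ and applying the reflection formula $t_u(\gamma)=\gamma-\langle\gamma,\beta_u^{\vee}\rangle\beta_u$ inside each factor iteratively, one obtains a sum of monomial contributions indexed by walks in $\Gamma(B)$ that start at $j$ and terminate at a vertex equal or adjacent to $i$. Because the paths $p_1,\dots,p_r$ are pairwise disjoint outside $\{i,j\}$, and because $t^{(-1)}$ is applied first (to $\beta_j$), then $t^{(-2)}$, and so on, a walk contributes nontrivially only if its successive vertices lie in paths $p_{k_1},p_{k_2},\dots,p_{k_s}$ with $k_1\le k_2\le\cdots\le k_s$; this is precisely the increasing-path condition. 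Each increasing path $q$ then contributes, after accounting for the normalization of $(\cdot,\cdot)$, a term of the form $\pm\sqrt{w(q)}\cdot\tfrac12\sqrt{(\beta_j,\beta_j)(\beta_i,\beta_i)}$, while the ``no reflections applied'' term contributes $\pm\sqrt{a}\cdot\tfrac12\sqrt{(\beta_j,\beta_j)(\beta_i,\beta_i)}$ from the direct pairing $(\beta_j,\beta_i)$.

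The main obstacle is coordinating signs so that all these monomial contributions combine into the specific form $\sqrt{a}-\sum_q\sqrt{w(q)}$. This is where admissibility of $A$ is essential: for each increasing path $q$, the subdiagram formed by $q$ together with the edge $i\to j$ is an oriented cycle in $\Gamma(B)$, and the admissibility sign rule ``$\prod(-A_{u,v})$ is negative on oriented cycles'' forces each increasing-path monomial to carry the sign opposite to that of $(\beta_j,\beta_i)$, which produces exactly the stated difference. Once $N(\beta_j,g(\beta_i))=x$ is verified, the dihedral-order dictionary completes the proof. As a sanity check, the case $r=1$ specializes back to Theorem~\ref{th:invariance}(iii): the only increasing path from $j$ to $i$ in $P$ is $p_1$ itself, so $x=(\sqrt{a}-\sqrt{w(p_1)})^2$. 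This special case also suggests a possible alternative inductive argument on $r$, absorbing a new path $p_{r+1}$ at each step by further conjugation and invoking the mutation compatibility of companion bases furnished by assumption~\eqref{eq:sign-coherence}.
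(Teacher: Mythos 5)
Your overall strategy coincides with the paper's: both identify $t_jt^{(1)}\cdots t^{(r)}t_{i}t^{(-r)}\cdots t^{(-1)}$ as the product of the two reflections $s_{\beta_j}$ and $s_{g(\beta_i)}$ with $g=t^{(1)}\cdots t^{(r)}$, both reduce the order computation to the integer $\langle\beta_j,\beta^{\vee}\rangle\langle\beta,\beta_j^{\vee}\rangle$ via the dihedral dictionary (Lemma~\ref{lem:edge2}), and both evaluate the pairing by expanding the conjugated root as a sum indexed by increasing paths. The genuine gap is in your sign-coordination step. You assert that for each increasing path $q$ the subdiagram formed by $q$ together with the edge $i\to j$ is an oriented cycle of $\Gamma(B)$, so that admissibility directly fixes the sign of each monomial. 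This is false precisely for the increasing paths that make the statement nontrivial: such a path may travel along edges joining different $p_k$'s (the edges written $\leftrightarrow$ in the statement), whose orientations are unconstrained by the hypotheses, and the induced subdiagram on the vertices of $q\cup\{i,j\}$ need not be a chordless cycle at all (for instance $j$ is adjacent to every $j^k_1$, which creates chords as soon as $q$ visits the first vertex of more than one $p_k$). The hypothesis only guarantees that each $p_k\cup\{i,j\}$ is an oriented induced cycle.

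The paper closes exactly this hole with Lemma~\ref{lem:lemma2}: after simultaneous sign changes in rows and columns (which alter neither the reflections $t_i$ nor the group they generate) one may assume $A_{i,j}>0$ while $A_{u,v}\le 0$ for every other edge of $P$, including the cross edges. Its proof first propagates signs along each $p_k$ using the oriented cycles $p_k\cup\{i,j\}$, and then --- this is the part your argument is missing --- rules out a positive entry on a cross edge between $p_k$ and $p_l$ by exhibiting a suitable \emph{non-oriented} cycle on which that edge would be the unique edge with positive companion entry, contradicting admissibility. Without this normalization the increasing-path monomials need not all carry the sign opposite to that of $(\beta_j,\beta_i)$, and the expansion need not collapse to $\sqrt{a}-\sum_q\sqrt{w(q)}$. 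The remainder of your outline (the expansion over increasing paths, which the paper establishes as a Claim by induction on the number of reflections in the tail of the product, using that each $t_{j^k_l}$ occurs exactly once in $t^{(1)}\cdots t^{(r)}$) is sound in structure, but it too relies on that same normalization to produce the coefficients $\sqrt{w(q)}$ with coherent signs.
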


\begin{theorem}\label{th:invariance-affine}
Suppose that $B_0$ is a skew-symmetrizable matrix whose diagram is an extended Dynkin diagram. 
Suppose also that $B$ is mutation-equivalent to $B_0$ and let $A$ be an admissible quasi-Cartan companion of $B$. Let $\mathcal{B}=\{\beta_1,...,\beta_n\}$ be a companion basis for $A$. 
Then the reflections $t_i:=s_{\beta_i}$ satisfy the relations given in \cite[Table 4.1]{FTu}.
\end{theorem}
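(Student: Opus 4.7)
The plan is to deduce Theorem~\ref{th:invariance-affine} directly from Theorems~\ref{th:invariance} and~\ref{th:invariance2}, by recognizing the relations in \cite[Table 4.1]{FTu} as specific instances of our more general cycle and multi-cycle relations. Since $B_0$ is of extended Dynkin type, the Weyl group $W$ is an affine Weyl group, the mutation class of $B_0$ is finite and explicitly classified in \cite{FTu}, and assumption \eqref{eq:sign-coherence} is available (in the skew-symmetric cases by \cite{S6}; in the remaining affine cases it can be verified directly in the classified diagrams). Thus Theorems~\ref{th:invariance} and~\ref{th:invariance2} may be applied to every $B$ in the mutation class of $B_0$.

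First I would handle the ``pair'' relations of Table 4.1: the relations $t_i^2 = e$ and $(t_it_j)^{m_{ij}}=e$ between two reflections are furnished immediately by parts (i) and (ii) of Theorem~\ref{th:invariance}. Next I would analyse the ``cycle'' relations case by case, running through the list of mutation-equivalent diagrams tabulated in \cite{FTu}. For a relation attached to a single oriented cycle $C$, one applies Theorem~\ref{th:invariance}(iii) and computes $x=(q_i\cdots q_{i+d-2} - q_{i+d-1})^2$; for a relation attached to a configuration of several oriented cycles sharing a distinguished edge $i\xrightarrow{a} j$ (a ``theta'' or multi-cycle subdiagram), one applies Theorem~\ref{th:invariance2} and evaluates $x = (\sqrt{a}-\sum \sqrt{w(q)})^2$ summed over the increasing paths in $P$. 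The resulting exponent $m\in\{2,3,4,6,\infty\}$ has then to be matched to the exponent prescribed in \cite[Table 4.1]{FTu}, which in every case amounts to comparing $x$ with $4\cos^2(\pi/m)$.

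The main obstacle is the case analysis for the non-simply-laced affine types $\widetilde{B}_n$, $\widetilde{C}_n$, $\widetilde{F}_4$, and $\widetilde{G}_2$: here edges of weight $2$, $3$ or $4$ appear inside cycles, and their square roots must be combined carefully in the formulas for $x$. For each such diagram in the affine mutation class, and for each cycle or multi-cycle relation listed in \cite{FTu}, one must check that the computed value of $x$ yields precisely the tabulated exponent; the ordering condition defining ``increasing paths'' in Theorem~\ref{th:invariance2} is crucial here and needs to be matched with the orientations chosen in \cite{FTu}. A secondary subtlety is that \cite{FTu} works with an implicit sign convention on the reflections; one therefore has to verify that the admissible companion $A$ supplied by \eqref{eq:sign-coherence} is compatible with that convention, which reduces to a short argument propagating the sign/admissibility condition along the mutation sequence producing $\mathcal{B}$ from the simple-root basis, as in \cite[Theorem 1.2]{S6}.
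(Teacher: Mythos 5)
There is a genuine gap. Your plan rests on the claim that the relations of \cite[Table 4.1]{FTu} are ``specific instances'' of Theorems~\ref{th:invariance} and~\ref{th:invariance2}, but the characteristic affine relations in that table are not of the form covered by either theorem. Both Theorem~\ref{th:invariance}(iii) and Theorem~\ref{th:invariance2} produce relations of the shape $(t_j\, w\, t_i\, w^{-1})^m=e$, i.e.\ a \emph{generator} times a conjugate of a generator, where $w$ is a product of the $t$'s along directed paths that form oriented full cycles with a distinguished edge $\{i,j\}$. The Table 4.1 relations are instead products of \emph{two} nontrivially conjugated reflections; for $\tilde{D}_n$ the relation is $(t_1t_2t_3t_2t_1\cdot t_4t_5\cdots t_nt_{n+1}t_n\cdots t_5t_4)^2=e$. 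Even after conjugating it into the form $(t_3\, w\, t_{n+1}\, w^{-1})^2=e$ with $w=t_2t_1t_4\cdots t_n$, you would still have to show that $w$ decomposes as a product over disjoint directed paths from $3$ to $n+1$, each forming an oriented induced cycle with an edge $\{3,n+1\}$, and that the resulting value of $x=(\sqrt a-\sum\sqrt{w(q)})^2$ matches the tabulated exponent. That recognition step is the entire content of the theorem, and in your write-up it is asserted rather than proved; it is not automatic from the shape of the subdiagrams in \cite{FTu}.

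The paper takes a different, and more robust, route: a direct check. For each relation $(u\cdot v)^m=e$ in Table 4.1 it identifies the real roots $\gamma,\delta$ with $u=s_\gamma$, $v=s_\delta$ (for $\tilde{D}_n$, after normalizing the signs of the admissible companion as in Lemma~\ref{lem:lemma2}, one gets $\gamma=t_1t_2(\beta_3)=\beta_1+\beta_2+\beta_3$ and $\delta=t_4\cdots t_n(\beta_{n+1})=\beta_4+\cdots+\beta_{n+1}$), computes $(\gamma,\delta)$ with the invariant bilinear form, and reads off the order of $s_\gamma s_\delta$ from Lemma~\ref{lem:edge2}. This computation does not care whether the word matches the patterns of Theorems~\ref{th:invariance} and~\ref{th:invariance2}, which is exactly why it succeeds where your reduction stalls. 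Your instinct to compare $x$ with $4\cos^2(\pi/m)$ is the right one, but it must be applied to the roots $\gamma,\delta$ directly rather than funneled through the earlier theorems. Note also that for extended Dynkin type the existence of an admissible companion is not an assumption to be ``verified directly in the classified diagrams'': it is a theorem of \cite{S3}, and the paper's proof uses that citation in place of \eqref{eq:sign-coherence}.
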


As an application of our study of the Weyl group in relation with the mutation operation, we obtain the following combinatorial property of the mutation classes.

\begin{theorem}\label{th:multiple-edge}
Suppose that $B$ is a skew-symmetrizable matrix such that $\Gamma(B)$ is acyclic. Suppose also that any edge in $\Gamma(B)$ has weight which is greater than or equal to $4$. Then, under the assumption \eqref{eq:sign-coherence}, for any $B'$ which is mutation-equivalent to $B$, any edge-weight in $\Gamma(B')$ is greater than or equal to $4$.
\end{theorem}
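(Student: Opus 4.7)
The plan is to argue by contradiction using Theorem~\ref{th:invariance}(ii). Suppose some $B'$ mutation-equivalent to $B$ has an edge $\{i,j\}$ in $\Gamma(B')$ of weight $w\in\{1,2,3\}$, and let $\mathcal{B}=\{\beta_1,\ldots,\beta_n\}$ be a companion basis for an admissible quasi-Cartan companion $A$ of $B'$ (both existing by~\eqref{eq:sign-coherence}). Applying Theorem~\ref{th:invariance}(ii) --- with $B$ here playing the role of the theorem's $B_0$ and $B'$ playing the role of its $B$ --- gives the relation $(t_it_j)^{m_{ij}}=e$ in $W$ with exponent $m_{ij}\in\{3,4,6\}$ \emph{finite}, so the dihedral subgroup $\langle t_i,t_j\rangle\subseteq W$ is finite.

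The key structural observation is that our hypotheses on $B$ make $W$ a \emph{right-angled} Coxeter group. Indeed, since $\Gamma(B)$ is acyclic the standard simple reflections $s_k$ give a Coxeter presentation of $W$ whose exponents are read off from the edge weights of $\Gamma(B)$ via the table preceding Theorem~\ref{th:invariance}; as every edge of $\Gamma(B)$ has weight $\geq 4$, only $m_{kl}\in\{2,\infty\}$ occur. By a classical theorem of Tits, every finite subgroup of a Coxeter group is conjugate into a finite standard parabolic subgroup, and in the right-angled case these parabolic subgroups are elementary abelian $2$-groups. Hence $\langle t_i,t_j\rangle$ must be elementary abelian, which forces its order (as a dihedral group) to be at most $4$, so the order of $t_it_j$ is at most $2$. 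Since $\mathcal{B}$ is linearly independent, $\beta_i\neq\pm\beta_j$, so $t_i\neq t_j$ and $t_it_j\neq e$; consequently $t_i$ and $t_j$ commute.

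To conclude, two distinct commuting reflections $s_{\beta_i}, s_{\beta_j}$ correspond to orthogonal roots: from $s_{\beta_i}s_{\beta_j}s_{\beta_i}=s_{s_{\beta_i}(\beta_j)}$ the commutation forces $s_{\beta_i}(\beta_j)=\pm\beta_j$; the minus sign would put $\beta_j$ in the line spanned by $\beta_i$, contradicting linear independence, so we must have $\langle\beta_j,\beta_i^\vee\rangle=0$. But $\langle\beta_j,\beta_i^\vee\rangle=A_{ij}$, while $|A_{ij}A_{ji}|=w\geq 1$ since $\{i,j\}$ is an edge of $\Gamma(B')$, so $A_{ij}\neq 0$ --- the desired contradiction. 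The main care point is the right-angled step: one must articulate that $W$ as a Kac-Moody Weyl group really equals the abstract Coxeter group with the indicated exponents, so that Tits's finite-subgroup theorem applies and rules out non-abelian finite reflection subgroups; the rest of the argument is a short routine computation.
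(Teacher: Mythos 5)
Your argument is correct, and its skeleton matches the paper's: both proofs reduce the claim to the fact that in the right-angled Coxeter group $W$ determined by $\Gamma(B)$ (all exponents $2$ or $\infty$ since every edge weight is $\geq 4$), two reflections coming from a connected pair $i,j$ in $\Gamma(B')$ cannot generate a finite non-abelian dihedral group. But you justify the central "right-angled" step differently. The paper proves a standalone statement (Proposition~\ref{prop:multiple-edge-cox}): for \emph{any} two reflections in such a $W$, the order of their product is $2$ or $\infty$; this is done concretely via the geometric representation, noting that $B(\beta_i,\beta_j)$ is an even integer and citing a lemma of Abramenko--Brown on products of two reflections. It then combines this dichotomy with Lemma~\ref{lem:edge2}, which gives the \emph{exact} order of $t_it_j$ in terms of $x=\langle\beta_j,\beta_i^\vee\rangle\langle\beta_i,\beta_j^\vee\rangle=w$, to conclude directly that $w\geq 4$. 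You instead run a contradiction: Theorem~\ref{th:invariance}(ii) gives only the relation $(t_it_j)^{m_{ij}}=e$ (an upper bound on the order, which is all you need for finiteness of $\langle t_i,t_j\rangle$), then Tits's theorem on finite subgroups of Coxeter groups forces this group into a finite standard parabolic, which in the right-angled case is elementary abelian; your final step, deriving $\langle\beta_j,\beta_i^\vee\rangle=0$ from commutativity of distinct reflections, is a hand-made version of the $x=0\Leftrightarrow m=2$ case of the paper's Lemma~\ref{lem:edge2}. Both routes require identifying the Kac--Moody Weyl group with the abstract Coxeter group on the exponents $m_{kl}$, which you correctly flag; the paper's route is more self-contained and yields the sharper dichotomy on orders of products of arbitrary reflections, while yours is shorter at the top level but imports Tits's finite-subgroup theorem as a black box.
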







We prove our results in Section~\ref{sec:proof} after some preparation in Section~\ref{sec:pre}.

\vspace{.1in} 

\noindent
Acknowledgement: I thank M. Warkentin for drawing my attention to the property in Theorem~\ref{th:multiple-edge}.

\section{Preliminaries}
\label{sec:pre}

In this section, we will recall some more terminology and prove some statements that we will use to prove our results. First, let us recall that the diagram of a skew-symmetrizable (integer) matrix has the following property: 

\begin{align}
\label{eq:perfect-sq}
&\text{the product of weights along any cycle is a perfect square, i.e. the square}
\\
\nonumber
&\text{of an integer. }
\end{align}

\noindent
Thus we can use the term diagram to mean a directed graph, with no loops or two-cycles, such that the edges are weighted with positive integers satisfying \eqref{eq:perfect-sq}. Let us note that if an edge in a diagram has weight equal to one, then we do not specify its weight in the picture. 

For any vertex $k$ in a diagram $\Gamma$, the associated mutation $\mu_k$ changes $\Gamma$ as follows \cite{CAII}:
\begin{itemize} 
\item The orientations of all edges incident to~$k$ are reversed, 
their weights intact. 
\item 
For any vertices $i$ and $j$ which are connected in 
$\Gamma$ via a two-edge oriented path going through~$k$ (see  
Figure~\ref{fig:diagram-mutation-general}), 
the direction of the edge $\{i,j\}$ in $\mu_k(\Gamma)$ and its weight $\gamma'$ are uniquely determined by the rule 
\begin{equation} 
\label{eq:weight-relation-general} 
\pm\sqrt {\gamma} \pm\sqrt {\gamma'} = \sqrt {\alpha\beta} \,, 
\end{equation} 
where the sign before $\sqrt {\gamma}$ 
(resp., before $\sqrt {\gamma'}$) 
is ``$+$'' if $i,j,k$ form an oriented cycle 
in~$\Gamma$ (resp., in~$\mu_k(\Gamma)$), and is ``$-$'' otherwise. 
Here either $\gamma$ or $\gamma'$ can be equal to~$0$, which means that the corresponding edge is absent. 
 
\item 
The rest of the edges and their weights in $\Gamma$ 
remain unchanged. 
\end{itemize} 

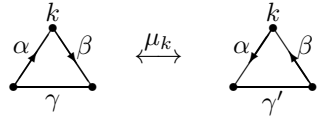
\begin{figure}[ht] 
\begin{center}
\setlength{\unitlength}{1.5pt} 
\begin{picture}(30,17)(-5,0) 
\put(0,0){\line(1,0){20}} 
\put(0,0){\line(2,3){10}} 
\put(0,0){\vector(2,3){6}} 
\put(10,15){\line(2,-3){10}} 
\put(10,15){\vector(2,-3){6}} 
\put(0,0){\circle*{2}} 
\put(20,0){\circle*{2}} 
\put(10,15){\circle*{2}} 
\put(2,10){\makebox(0,0){$\alpha$}} 
\put(18,10){\makebox(0,0){$\beta$}} 
\put(10,-4){\makebox(0,0){$\gamma$}} 
\put(10,19){\makebox(0,0){$k$}} 
\end{picture} 
$ 
\begin{array}{c} 
\stackrel{\textstyle\mu_k}{\longleftrightarrow} 
\\[.3in] 
\end{array} 
$ 
\setlength{\unitlength}{1.5pt} 
\begin{picture}(30,17)(-5,0) 
\put(0,0){\line(1,0){20}} 
\put(0,0){\line(2,3){10}} 
\put(10,15){\vector(-2,-3){6}} 
\put(10,15){\line(2,-3){10}} 
\put(20,0){\vector(-2,3){6}} 
\put(0,0){\circle*{2}} 
\put(20,0){\circle*{2}} 
\put(10,15){\circle*{2}} 
\put(2,10){\makebox(0,0){$\alpha$}} 
\put(18,10){\makebox(0,0){$\beta$}} 
\put(10,-4){\makebox(0,0){$\gamma'$}} 
\put(10,19){\makebox(0,0){$k$}} 
\end{picture} 
\end{center}
 
\vspace{-.2in} 
\caption{Diagram mutation} 
\label{fig:diagram-mutation-general} 
\end{figure}

\noindent 
This operation is involutive, i.e. $\mu_k(\mu_k(\Gamma))=\Gamma$, so it defines an equivalence relation on the set of all diagrams. More precisely, two diagrams are called \emph{mutation-equivalent} if they can be obtained from each other by applying a sequence of mutations. The \emph{mutation class} of a diagram $\Gamma$ is the set of all diagrams which are mutation-equivalent to $\Gamma$. If $B$ is a skew-symmetrizable matrix, then $\Gamma(\mu_k(B))=\mu_k(\Gamma(B))$ (see 
Section~\ref{sec:intro} for the definition of $\mu_k(B)$). Let us note that if $B$ is not skew-symmetric, then the diagram $\Gamma(B)$ does not determine $B$ as there could be several different skew-symmetrizable matrices whose diagrams are equal; however, if a skew-symmetrizing matrix $D$ is fixed, then $\Gamma(B)$ determines $B$, so mutation class of $\Gamma(B)$ determines that of $B$ (the matrix $\mu_k(B)$ shares the same skew-symmetrizing matrix $D$ with $B$ \cite{CAII}). 

Let us recall that an $n\times n$  matrix $A$ is called symmetrizable if there exists a diagonal matrix $D$ with positive diagonal entries such that $DA$ is symmetric. A crucial property of $A$ is sign symmetry: $sgn(A_{i,j})=sgn(A_{j,i})$. We say that $A$ is a quasi-Cartan matrix if it is symmetrizable and all of its diagonal entries are equal to $2$. 
A \emph{quasi-Cartan companion} (or "companion" for short) of a skew-symmetrizable matrix $B$ is a quasi-Cartan matrix $A$ with $|A_{i,j}|= |B_{i,j}|$ for all $i \ne j$. The basic example of a quasi-Cartan companion of $B$ is the associated generalized Cartan matrix $A$, which is defined as $A_{i,j}= -|B_{i,j}|$, for all $i\ne j$.

let us first discuss an extension of the mutation operation to quasi-Cartan companions:
\begin{definition} 
\label{def:comp-mut} 
Suppose that $B$ is a skew-symmetrizable matrix and let $A$ be a quasi-Cartan companion of $B$. 
Let $k$ be an index. For each sign $\epsilon=\pm1$, "the $\epsilon$-mutation of $A$ at $k$" is the quasi-Cartan matrix 
$\mu^\epsilon(A)=A'$ such that for any $i,j \ne k$: $A'_{i,k}=\epsilon sgn(B_{k,i})A_{i,k}$, $A'_{k,j}=\epsilon sgn(B_{k,j})A_{k,j}$, $A'_{i,j}=A_{i,j}-sgn(A_{i,k}A_{k,j})[B_{i,k}B_{k,j}]_+$. If $A$ is admissible, then $A'$ is a quasi-Cartan companion of $B'=\mu_k(B)$
\end{definition}

\noindent
Note that for $\epsilon=-1$, one obtains the formula in \cite[Proposition~3.2]{BGZ}. Also note that if $D$ is a skew-symetrizing matrix of $B$, then $D$ is also a symmetrizing matrix for $A$, with $DA=S$ symmetric. If we consider $S$ as the Gram matrix of a symmetric bilinear form on $\ZZ^n$ with respect to a basis $\mathcal{B}=\{e_1,...,e_n\}$, then $DA'=S'$ is the Gram matrix of the same symmetric bilinear form with respect to the basis $\mathcal{B'}=\{e'_1,e'_2,...,e'_n\}$ defined as follows: $e'_k=-e_k$; $e'_i=e_i-A_{k,i}e_k$ if $\epsilon B_{k,i}>0$; $e'_i=e_i$ if else. We write $\mu_k(\mathcal{B})=\mathcal{B}'$. Let us note that if $A$ is admissible and $\mathcal{B}$ is a companion basis, then $s_{e'_i}=s_{e_k}s_{e_i}s_{e_k}$, if $\epsilon B_{k,i}>0$ and $s_{e'_i}=s_{e_i}$ if else; this implies that the groups generated by the reflections of  $\mathcal{B}$ and $ \mathcal{B}' $ are equal.


The basic property of mutations of quasi-Cartan companions is the following: 
\begin{proposition}\label{prop:companion}
Suppose that $B_0$ is a skew-symmetrizable matrix whose diagram is acyclic and let $A_0$ be the corresponding generalized Cartan matrix. Suppose also that $B$ is mutation-equivalent to $B_0$. Then any admissible quasi-Cartan companion $A$ of $B$ can be obtained from $A_0$ by a sequence of mutations and/or simultaneous sign changes in some rows and columns. 

\end{proposition}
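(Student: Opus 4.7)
The plan is to prove the proposition by induction on the length of a shortest mutation sequence connecting $B_0$ to $B$, using as a key reduction the following uniqueness lemma: \emph{any two admissible quasi-Cartan companions of the same skew-symmetrizable matrix differ by simultaneous sign changes in certain rows and columns}. To prove this lemma, I would write two such companions as $A$ and $A'$, so that $A'_{i,j}=\sigma_{i,j}A_{i,j}$ for some edge-signing $\sigma_{i,j}\in\{\pm 1\}$. Admissibility of both $A$ and $A'$ forces $\prod_{\{i,j\}\in Z}\sigma_{i,j}=+1$ on every cycle $Z$ of $\Gamma(B)$, oriented or non-oriented, since both companions must produce the same prescribed sign on each cycle. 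Hence $\sigma$ is a $\ZZ/2\ZZ$-coboundary, $\sigma_{i,j}=\delta_i\delta_j$ for some $\delta\colon\{1,\dots,n\}\to\{\pm 1\}$, and $A'=DAD$ with $D=\operatorname{diag}(\delta_1,\dots,\delta_n)$, which is exactly a sequence of simultaneous sign changes.

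With this lemma in hand, the base case $B=B_0$ is immediate: $A_0$ is admissible (as noted in the text, since $\Gamma(B_0)$ is acyclic and every off-diagonal entry of $A_0$ is negative), so any admissible companion of $B_0$ differs from $A_0$ by simultaneous sign changes. For the inductive step, I would write $B=\mu_k(B'')$ with $B''$ one step closer to $B_0$ and let $A$ be an admissible companion of $B$. Form $\widetilde{A}=\mu_k^{\epsilon}(A)$ as in Definition~\ref{def:comp-mut}; this is a quasi-Cartan companion of $B''$ by that definition. The aim is then to show that, for an appropriate choice of sign $\epsilon\in\{\pm 1\}$ (or equivalently, after absorbing a simultaneous sign change on the $k$-th row and column), $\widetilde{A}$ is admissible. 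Granted this, the inductive hypothesis expresses $\widetilde{A}$ as $A_0$ acted on by mutations and sign changes, and a further companion mutation reversing the last step exhibits $A$ itself in the same form.

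The main obstacle is the admissibility-preservation step, namely showing that a suitable $\mu_k^{\epsilon}$ sends admissible companions to admissible companions. Done directly, this requires tracing how the cycles of $\Gamma(B)$ and their oriented/non-oriented status transform under the diagram mutation $\mu_k$, and comparing these transformations with the sign-change rules of Definition~\ref{def:comp-mut}. A cleaner route, which I would prefer to pursue, is to invoke the companion-basis construction recalled in the excerpt: under hypothesis \eqref{eq:sign-coherence}, every admissible companion of $B$ arises as the Gram matrix of a companion basis obtainable from the basis of simple roots by a sequence of basis mutations, and the remark preceding Definition~\ref{def:comp-mut} shows that each basis mutation induces precisely an $\epsilon$-mutation on the Gram side, while the basis sign flips $\beta_i\mapsto-\beta_i$ induce the allowed simultaneous sign changes. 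This bypasses the cycle bookkeeping altogether, at the cost of importing the companion-basis existence theorem.
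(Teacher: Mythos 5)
Your overall scaffold is essentially the one the paper relies on: the paper's entire proof is the remark that the statement follows from the ``basic fact'' that if $A$ is an admissible companion of $B$ then $\mu_k^{\epsilon}(A)$ is an admissible companion of $\mu_k(B)$, with everything else (including, implicitly, uniqueness of admissible companions up to sign changes) outsourced to \cite{S6}. Your explicit uniqueness lemma is a genuine and correct addition: the ratio $\sigma_{i,j}=A'_{i,j}/A_{i,j}$ has product $+1$ around every induced cycle by admissibility, the induced (chordless) cycles generate the cycle space of the underlying graph over $\ZZ/2\ZZ$ (split any cycle with a chord into two shorter ones and induct), so $\sigma$ is a coboundary and $A'=DAD$. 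That cleanly handles the base case and the ``any admissible companion'' quantifier, which the paper leaves implicit.

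The genuine gap is exactly the step you flag: admissibility-preservation under $\mu_k^{\epsilon}$. Route (a) is only named, not carried out, and it is not a triviality --- one must track how oriented and non-oriented induced cycles through $k$ transform under diagram mutation and match this against the sign rules of Definition~\ref{def:comp-mut}; this is the actual content of the proposition and is precisely what \cite{S6} proves. Route (b), which you say you prefer, is circular: the statement of \cite[Theorem~1.2]{S6} that every admissible companion of $B$ has a companion basis \emph{obtainable from the simple roots by a sequence of mutations} already contains Proposition~\ref{prop:companion} (take Gram matrices), so invoking it does not bypass the cycle bookkeeping, it assumes its conclusion. Two smaller points: your inductive step also needs involutivity of companion mutation, $\mu_k^{-\epsilon}(\mu_k^{\epsilon}(A))=A$, to ``reverse the last step'' --- this is true and checkable from Definition~\ref{def:comp-mut}, but should be said; and note that assumption \eqref{eq:sign-coherence} is an existence statement and is not what supplies the reachability you need, so it cannot substitute for the preservation argument either.
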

\noindent
These statements have been obtained in \cite{S6}; they follow from the basic fact that if $A$ is an admissible companion of $B$, then $\mu_k^\epsilon(A)=A'$  is an admissible companion of $\mu_k(B)$. 



We will use the following property of infinite Coxeter groups to prove one of our results:
\begin{proposition}\label{prop:multiple-edge-cox}
Let $W$ be a Coxeter group generated by simple reflections $s_1,...,s_n$ subject to the relations $s_i^2=1$ and $(s_is_j)^{m_{ij}}=1$ for $i\ne j$. Suppose that, for any $i\ne j$, we have $m_{ij}=2$ or $m_{ij}=\infty$.
Let $t_i\ne t_j$ be reflections in $W$, i.e. elements which are conjugate to simple reflections, and let $m'_{ij}$ be the order of the product $t_it_j$ in $W$. Then $m'_{ij}=2$ or $m'_{ij}=\infty$.

\end{proposition}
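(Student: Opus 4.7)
The plan is to analyze $W$ through its Tits geometric representation. Take $V=\mathbb{R}^n$ with basis $\alpha_1,\ldots,\alpha_n$, equipped with the symmetric bilinear form $B$ defined by $B(\alpha_i,\alpha_i)=1$ and $B(\alpha_i,\alpha_j)=-\cos(\pi/m_{ij})$ for $i\neq j$; the hypothesis $m_{ij}\in\{2,\infty\}$ forces $B(\alpha_i,\alpha_j)\in\{0,-1\}$. Tits' theorem guarantees that the assignment $s_i(v)=v-2B(\alpha_i,v)\alpha_i$ extends to a faithful linear action of $W$ on $V$ preserving $B$, so computing the order of $t_it_j$ in $W$ reduces to computing its order in this representation.

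The first key step is an integrality observation. With respect to the basis $\{\alpha_k\}$, each simple reflection $s_i$ acts by an integer matrix, because the coefficient $2B(\alpha_i,\alpha_j)$ appearing in the reflection formula lies in $\{0,\pm 2\}$. Hence every $w\in W$ has an integer matrix, every real root $\beta=w(\alpha_k)$ has integer coordinates in the basis $\{\alpha_l\}$, and $B(\beta,\beta')\in\ZZ$ for any two real roots $\beta,\beta'$, since $B$ is already $\ZZ$-valued on the basis.

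The second step is the standard rank-two computation linking the order of $t_it_j$ to $c:=B(\beta_i,\beta_j)$, where we write $t_i=s_{\beta_i}$ and $t_j=s_{\beta_j}$ for real roots $\beta_i,\beta_j$. Since $B(\beta_i,\beta_i)=B(\beta_j,\beta_j)=1$, the hypothesis $t_i\neq t_j$ forces $\beta_j\neq\pm\beta_i$, so $\beta_i,\beta_j$ are linearly independent. On the invariant plane $\mathrm{span}(\beta_i,\beta_j)$, the matrix of $t_it_j$ in the basis $(\beta_i,\beta_j)$ has trace $4c^2-2$ and determinant $1$. From the eigenvalues one reads: if $c=0$ then $t_it_j$ restricts to $-I$ and has order $2$; if $0<|c|<1$ then it is a nontrivial rotation, of finite order $\geq 3$; if $|c|\geq 1$ then it is parabolic (case $|c|=1$) or hyperbolic (case $|c|>1$), hence of infinite order.

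The conclusion is then immediate: by the first step $c\in\ZZ$, which rules out the range $0<|c|<1$, leaving only $c=0$ or $|c|\geq 1$, that is, $m'_{ij}\in\{2,\infty\}$. The substantive point of the argument is the integrality assertion, which hinges crucially on the right-angled hypothesis $m_{ij}\in\{2,\infty\}$ (for the other allowed values $m_{ij}\in\{3,4,6\}$ the entries $-\cos(\pi/m_{ij})$ are irrational and the conclusion would fail). Beyond this, I anticipate no obstacle other than carefully verifying the rank-two trace-and-determinant calculation and invoking Tits' faithfulness to pass from the representation back to $W$ itself.
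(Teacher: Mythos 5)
Your proof is correct and follows essentially the same route as the paper: both pass to the faithful Tits/geometric representation, observe that the right-angled hypothesis $m_{ij}\in\{2,\infty\}$ forces $B(\beta_i,\beta_j)$ to be an integer for any two real roots (an even integer in the paper's normalization, where the diagonal is $2$), and conclude by the rank-two dichotomy on that value. The only difference is that you carry out the trace-and-determinant computation on $\mathrm{span}(\beta_i,\beta_j)$ explicitly, where the paper instead cites Lemma~2.77 of Abramenko--Brown.
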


\begin{proof}
To prove the theorem we will use a geometric representation $\sigma:W\to GL({V})$ \cite[Chapter~5.3]{Hu}. To be more precise, let $V$ be the $n$-dimensional real vector space with basis ${\alpha}_1$, \dots ${\alpha}_n$ and equip $V$ with the symmetric bilinear form ${B}$ such that ${B}({\alpha}_i, {\alpha}_i)=2$ and ${B}({\alpha}_i, {\alpha}_j)=-2 \cos (\pi/m_{ij})$ for $i \neq j$. For each ${\alpha}_i$, the corresponding simple reflection $\sigma_i$ is defined on $V$ as $\sigma_i : v \mapsto v-{B}(v,{\alpha}_i) {\alpha}_i$; furthermore this action preserves the bilinear form ${B}$. The elements of $V$ that can be obtained from ${\alpha}_1$, \dots ${\alpha}_n$ by a sequence of reflections are called the real roots. 
The group $W$, which is generated by $\{s_1,...,s_n\}$, acts on $V$ by $\sigma(s_i)=\sigma_i$. Furthermore, this action is faithful \cite[Chapter~5.4]{Hu}, so, for any reflections $t,t'$ in $W$, the order of $\sigma(t)\sigma(t')$ is equal to the order of the product $tt'$ in $W$. 
Also, for any reflection $t$ in $W$, the transformation $\sigma(t)$ is a reflection with respect to a real root ${\beta}$,
i.e $\sigma(t) : v \mapsto v-{B}(v,\beta) {\beta}$
. 

Let us now suppose that $t_i \ne t_j$ are reflections in $W$ and let $\sigma(t_i)\ne \sigma(t_j)$ be reflections with respect to the real roots $\beta_i$ and $\beta_j$ respectively. To prove the theorem, we will show that the order of the product $\sigma(t_i)\sigma(t_j)$ is $2$ or $\infty$. 
Let us note that, under the assumptions of the theorem, we have ${B}({\alpha}_i, {\alpha}_j)\ne 0$ if and only if $|{B}({\alpha}_i,{\alpha}_j)|=2$; 
so ${B}({\beta}_i,{\beta}_j)=2k$ for some integer $k$. Then, by \cite[Lemma~2.77]{AB}, we have the following:
the order of $\sigma(t_i)\sigma(t_j)$ is $\infty$ if $k\ne 0$; it is $2$ if $k=0$. This completes the proof. 

\end{proof}

\section{Proofs of Main Results}
\label{sec:proof} Let us first recall some basics on root systems. In the setup of Theorems~\ref{th:invariance}, \ref{th:invariance2}, we denote by  $ ( , ) $ the invariant symmetric bilinear form defined by $A $ on the root lattice $ V=span(\beta_1,....,\beta_n) $ as $ (\beta_i,\beta_j) =d_iA_{i,j}=d_jA_{j,i}=(\beta_j,\beta_i)$ where $ d_i,d_j $ are the corresponding entries of the symmetrizing matrix $ D $, which is also a skew-symmetrizng matrix for $ B $ . This form is invariant under mutation of companions and simultaneous sign changes in rows and columns, thus it is the same form defined by the initial generalized Cartan matrix $ A_0 $ (Proposition \ref{prop:companion}). Let us note that
$s_{\beta_i}(\beta)=\beta-<\beta,{\beta_i}^{\vee}>\beta_i$, with $<\beta,{\beta_i}^{\vee}>= \dfrac{2(\beta_i,\beta)}{(\beta_i,\beta_i)} $. In particular, 
$s_{\beta_i}(\beta_j)=\beta_j-<\beta_j,{\beta_i}^{\vee}>\beta_i=\beta_j-A_{i,j}\beta_i$. Let us also note that  if $\{i,j\}$ is an edge in $\Gamma(B)$ whose weight $w$, then $ |A_{i,j}|=\sqrt{d_j/d_i}\sqrt{w}$  (because $ w= A_{i,j}A_{j,i}$ and $ d_iA_{i,j}=d_jA_{j,i} $) 

We will prove now some lemmas that we use to prove our theorems. 
\begin{lemma}\label{lem:edge2}
Let $ \{\beta_j,{\beta_i}\} $ be linearly independent real roots in the root system. defined by a symmetrizable generalized Cartan matrix $ A_0 $
Let $x= <\beta_j,{\beta_i}^{\vee}><\beta_i,{\beta_j}^{\vee}>$. 
Then the order $ m $ of $s_{\beta_i}s_{\beta_j}$ is as follows: if $x=0$, then $m=2$; if $x=1$, then $m=3$; if $x=2$, then $m=4$; if $x=3$, then $m=6$; if $x\geq 4$, then $m=\infty$.
\end{lemma}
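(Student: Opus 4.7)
The plan is to reduce the question to linear algebra on the two-dimensional subspace $V_0=\mathrm{span}(\beta_i,\beta_j)$, which is preserved by both reflections since $s_{\beta_i}(\beta_j)=\beta_j-\langle\beta_j,\beta_i^\vee\rangle\beta_i\in V_0$ (and symmetrically for $s_{\beta_j}$). First I would compute, in the ordered basis $\{\beta_i,\beta_j\}$, the matrix
$$M=\begin{pmatrix} x-1 & \langle\beta_j,\beta_i^\vee\rangle \\ -\langle\beta_i,\beta_j^\vee\rangle & -1 \end{pmatrix}$$
of $s_{\beta_i}s_{\beta_j}|_{V_0}$. Its determinant is $1$ and its trace is $x-2$, so its characteristic polynomial is $\lambda^2-(x-2)\lambda+1$, with eigenvalues $\lambda_{\pm}=\tfrac{1}{2}\bigl((x-2)\pm\sqrt{x(x-4)}\bigr)$.

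Next I would read off the order of $M$ directly from these eigenvalues. For $x\in\{0,1,2,3\}$ the roots are a pair of complex conjugates $e^{\pm i\theta}$ on the unit circle with $2\cos\theta=x-2$, giving $\theta\in\{\pi,2\pi/3,\pi/2,\pi/3\}$ and hence orders $2,3,4,6$ respectively. For $x=4$ the characteristic polynomial is $(\lambda-1)^2$, but $M\ne I$ (its $(1,1)$-entry is $3$), so $M$ is a non-trivial unipotent of infinite order. For $x>4$ the eigenvalues are distinct positive reals with product $1$, neither equal to $\pm 1$, so $M$ again has infinite order.

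Finally I would transfer this to the order of $s_{\beta_i}s_{\beta_j}$ itself. The Gram determinant of the invariant bilinear form $(\,,\,)$ restricted to $V_0$ equals $(\beta_i,\beta_i)(\beta_j,\beta_j)(1-x/4)$, using the relation $x=4(\beta_i,\beta_j)^2/((\beta_i,\beta_i)(\beta_j,\beta_j))$ recalled at the start of the section; this is non-zero precisely when $x\ne 4$. In that case $V=V_0\oplus V_0^\perp$, and both $s_{\beta_i}$ and $s_{\beta_j}$ act as the identity on $V_0^\perp$, so the order of $s_{\beta_i}s_{\beta_j}$ on $V$ equals the order of $M$. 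When $x=4$, the fact that $M$ itself already has infinite order on the invariant subspace $V_0$ forces $s_{\beta_i}s_{\beta_j}$ to have infinite order as well. Combining these with the faithfulness of the Weyl group action on its natural geometric representation (as already invoked in the proof of Proposition~\ref{prop:multiple-edge-cox}) yields the stated orders in $W$. The one delicate point I anticipate is the degenerate case $x=4$, but it is absorbed by the observation that $M$ is unipotent of infinite order on $V_0$.
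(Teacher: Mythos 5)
Your proof is correct and follows essentially the same route as the paper: restrict to $V_0=\mathrm{span}(\beta_i,\beta_j)$, observe that both reflections fix the orthogonal complement pointwise, and determine the order of the product there. You actually carry out the ``direct check'' that the paper leaves implicit, and you treat the case $x=4$ --- where the Gram determinant vanishes, so $V\ne V_0\oplus V_0^{\perp}$ and the paper's assertion that the restricted form is positive is in fact false for $x\ge 4$ --- more carefully via the non-trivial unipotent argument; the only micro-gap is at $x=0$, where the eigenvalues coincide at $-1$ and order $2$ needs the extra remark that both pairings $\langle\beta_j,\beta_i^{\vee}\rangle$ and $\langle\beta_i,\beta_j^{\vee}\rangle$ have the same sign and hence both vanish, so that $M=-I$ exactly.
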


\begin{proof}
Let $V'=span(\beta_i,\beta_j)$ in in the root lattice $ V $.  Let $ V'' $ be an orthogonal complement of $ V' $ in $ V $ with respect to the symmetric bilinear form $ ( , ) $ defined by $C=DA$ on $ V $, where $ D $ is the symmetrizing matrix.  Then the restriction of this form to $ V' $ is positive, in particular non-degenerate, also the reflections $s_{\beta_i},$ and $s_{\beta_j}$ act trivially on $ V'' $. Thus the order of $s_{\beta_i}s_{\beta_j}$ on $ V $ is equal to the order of $s_{\beta_i}s_{\beta_j}$ on $ V' $, which can be directly checked to be as claimed (recall that $s_{\beta_i}(\beta_j)=\beta_j-<\beta_j,{\beta_i}^{\vee}>\beta_i$, with $<\beta_j,{\beta_i}^{\vee}>= \dfrac{2(\beta_i,\beta_j)}{(\beta_i,\beta_i)} $   ).
\end{proof}

 
\begin{lemma}
\label{lem:lemma2}
In the setup of Theorem~\ref{th:invariance2}, applying simultaneous sign changes in rows and columns if necessary, we may assume that $A_{i,j}>0$ and, for any edge $\{u, v\} \ne \{i,j\} $ in $P$, we have $A_{u,v} \leq 0$.
\end{lemma}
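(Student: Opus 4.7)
The plan is to construct the required sign-change assignment in stages: first fix $A_{i,j}$, then fix the interior path edges, and finally invoke admissibility to force the remaining signs. If $A_{i,j}<0$, a simultaneous sign change at row and column $i$ flips $A_{i,j}$ while preserving admissibility, so we may assume from the start that $A_{i,j}>0$.

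Next, for each path $p_k$ in turn, I would traverse the interior vertices $j^k_1,\ldots,j^k_{n_k}$ in order and apply a sign change at $j^k_x$ whenever the preceding edge still carries a positive $A$-entry. Since the interior vertices of distinct paths are pairwise disjoint and $i$, $j$ are not touched again, these operations act independently on each $p_k$ without disturbing $A_{i,j}$; after they are complete, each interior edge $\{j^k_{x-1},j^k_x\}$ of $p_k$ (with $j^k_0=j$) satisfies $A_{j^k_{x-1},j^k_x}<0$. The terminal edge $\{j^k_{n_k},i\}$ is then automatic: the primary cycle $Z_k=\{j,j^k_1,\ldots,j^k_{n_k},i\}$ is chordless and oriented by hypothesis, so admissibility gives $\prod_{Z_k}(-A_{u,v})<0$; since $-A_{i,j}<0$ and every factor except $(-A_{j^k_{n_k},i})$ is positive by the previous step, we must have $A_{j^k_{n_k},i}<0$.

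The main obstacle is to argue $A_{u,v}\le 0$ for the cross-edges $\{u,v\}=\{j^k_x,j^l_y\}$ in $P$ with $k\ne l$. For each such cross-edge I would apply admissibility to a chordless cycle of $\Gamma(B)$ that contains it and avoids $\{i,j\}$; the natural candidate is the cycle $C$ supported on the vertex sequence $j,j^k_1,\ldots,j^k_x,j^l_y,j^l_{y-1},\ldots,j^l_1$, which, when chordless, is non-oriented (its $p_l$-portion runs against the natural direction of $p_l$) and omits $\{i,j\}$, so $\prod_C(-A_{u,v})>0$; combined with the negativity of the path-edge entries already established, this forces $A_{j^k_x,j^l_y}<0$. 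The remaining technical subtlety is that $C$ may fail to be chordless when several cross-edges coexist; this can be handled either by processing the cross-edges in a carefully chosen order so that the one under consideration is the unique chord-creating cross-edge at its stage, or cohomologically by observing that the signs on edges of $P$ modulo simultaneous sign changes are determined by the signed products on a basis of chordless cycles, and verifying via admissibility that the desired target pattern agrees with $A$ on every such cycle.
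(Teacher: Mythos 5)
Your first two stages match the paper's argument: a sign change at $i$ or $j$ makes $A_{i,j}>0$, and propagating sign changes along each $p_k$ away from $j$ makes the path edges negative; your observation that the terminal edge $\{j^k_{n_k},i\}$ is then forced by admissibility of the oriented primary cycle (so no sign change at $i$ is ever needed) is in fact spelled out more carefully than in the paper. The gap is in the third stage. You correctly identify that the long non-oriented cycle through $j$ may fail to be chordless, but neither of your proposed remedies is an argument: no sign changes are performed at this stage (you are proving the cross-entries are \emph{already} $\le 0$), so "processing the cross-edges in a carefully chosen order" has no operation to order; and the "cohomological" alternative --- that the target sign pattern agrees with $A$ on a basis of chordless cycles --- is essentially a restatement of what is to be proved, since verifying agreement on the chordless cycles of $P$ containing cross-edges is exactly the missing step.

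The paper closes this gap by a case analysis on an offending positive cross-edge $e=\{u,v\}$ with $u=j^l_z$, $v=j^k_y$. One first normalizes the choice of $v$: among the neighbours of $u$ on $p_k$ carrying a positive entry, take the one closest to $j$, so that $A_{u,j^k_x}\le 0$ whenever $x<y$ and $\{u,j^k_x\}$ is an edge. If $u$ is adjacent to some $j^k_x$ with $x<y$, take the largest such $x$; then $\{j^k_x,j^k_{x+1},\dots,j^k_y,u\}$ is an induced, hence chordless, non-oriented cycle whose unique positive edge is $e$, contradicting admissibility. Only when $u$ has no neighbour on $p_k$ before $j^k_y$ does one fall back on the long cycle through $j$. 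In other words, a chord emanating from $u$ is not an obstruction but the source of a shorter non-oriented cycle to which admissibility applies directly. (The paper's remaining case is itself terse about possible chords between \emph{other} vertices of $p_l$ and $p_k$, so a fully rigorous version needs a minimality argument in any event.) To complete your write-up you would need to supply this reduction rather than defer it.
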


\noindent 
Proof.  Applying a sign change at $ i $ or $ j $ if necessary, we may assume that  $A_{i,j}>0$ . Suppose that there is an edge $e: u \to v$ in $p_k$ such that
$A_{u,v}>0$, and assume without loss of generality that $e$ is closest such edge in $p_k$ to $j$.  Let $A'$ be the companion obtained by applying the sign change at $v$; if there is an edge
$e':u' \to v'$ in $p_k$ such that $A'_{u',v'}>0$ , then $e'$ is further from $j$ than $e$. Applying sign changes in the same way we have a companion $ A'' $ such that for any edge  $\{x, y\} \ne \{i,j\} $ in $p_k$, we have $A''_{x,y}<0$.  then we can rename $ A'' $ as $ A $, and assume for the rest of the proof that $A_{i,j}>0$ and, for any edge $\{u, v\} \ne \{i,j\} $ in $p_k$, we have $A_{u,v}<0$, for any $ k=1,...,r $.

 Suppose now that $P $ contains an edge $e: u \to v$  such that $ u=j^l_z \in p_l $, $ v=j^k_y \in p_k $ , $ k\ne l $ with $A_{u,v}>0$. We may assume without loss of generality that  if $x<y$ then $A_{u,j^k_x} \leq 0$. (otherwise we take $ v=j^k_x $, exchanging $ u $ and $ v $ if necessary to keep the orientation of $ e $). Let us first  suppose that $ u $ is connected to a vertex $ j^k_x \in p_k $ with  $1\leq x \leq y-1$; if $ x<y-1 $, we assume without loss of generality that $ u $ is not connected to $ j^k_{x+1},...,j^k_{y-1} $ 
 Then the cycle $C=\{ j^k_{x}, j^k_{x+1},...,,j^k_{y-1} ,j^k_{y},u\}$ is non-oriented, however it has only one edge, which is $ e $, such that the corresponding entry of $ A $ is positive,  contradicting the assumption that $A$ is admissible. Similarly if $ u $ is not connected to any vertex $ j^k_x \in p_k $ with  $1\leq x \leq y-1$, then $C=\{j, j^k_{1}, j^k_{2},...,j^k_{y},u,j^l_{z-1}, j^l_{z-2},...,j^l_{1}\}$ is non-oriented, however it has only one edge, which is $ e $, such that the corresponding entry of $ A $ is positive,  contradicting the assumption that $A$ is admissible. 

\vskip 0.3cm
\noindent
{\bf Proof of Theorem~\ref{th:invariance}.}
The statement that $\{t_i:=s_{\beta_i}, i=1,...,n\}$ generates $ W $ follows from basic properties of admissible quasi-Cartan companions and their mutations as discussed in Section \ref{sec:pre}. To be more explicit, let us suppose that $B_0=\mu_k...\mu_1(B) $. Then, applying sign changes if necessary, the matrix  $ A_0=\mu^\epsilon_k...\mu^\epsilon_1(A) $  is equal to the associated generalized Cartan matrix; let  $\mathcal{B'}=\{\beta'_1,...,\beta'_n\}$ be the basis obtained from $ \mathcal{B} $ by the same sequence of mutations and sign changes (Proposition \ref{prop:companion}). Then the groups generated by the reflections of $ \mathcal{B} $ and $ \mathcal{B'} $ are equal. On the other hand, the reflections of $ \mathcal{B'} $ satisfy exactly the same relations as the simple reflections of the root system, so the group generated by the reflections of $ \mathcal{B} $ is equal to $ W $, which is generated by the simple reflections.

Let us now show that the relations given in the statement hold. The relation (i) is well-known; (ii) follows from Lemma~\ref{lem:edge2}. As for the relation (iii), we assume, without loss of generality, that $i=1$. Let $B'=\mu_2...\mu_{d-2}\mu_{d-1}(B)$ and, e.g., for $\epsilon=-1$, let $\mathcal{B'}=\mu_2^\epsilon...\mu_{d-2}^\epsilon\mu_{d-1}^\epsilon(\mathcal{B})$. Then for $\beta_d'$ we have $s_{\beta'_d}=s_{\beta_2}...s_{\beta_{d-1}}s_{\beta_d}s_{\beta_{d-1}}...s_{\beta_2}$ and $\beta'_1=\beta_1$, furthermore the weight of the edge $\{1,d\}$ in $\Gamma(B')$ is equal to $(q_1...q_{d-1}-q_d)^2$. Thus, by Lemma~\ref{lem:edge2}, we have the relation (iii). 

\vskip 0.3cm

\noindent
{\bf Proof of Theorem~\ref{th:invariance2}.} 
Let $\beta=t^{(1)}...t^{(r)}(\beta_i)$. Note that
$\beta$ is the positive root whose reflection is $t^{(1)}...t^{(r)}t_{i}t^{(-r)}...t^{(-1)}$. 
Let us also note that 

(*) each reflection $t_{j^k_l}$ is applied only once in the product $t^{(1)}...t^{(r)}$. 

\noindent
For a vertex $u$ in $P$, we call a path 
$$q: u\leftrightarrow{} j^{k_1}_{l_1}\leftrightarrow{} j^{k_2}_{l_2}\leftrightarrow{} \cdots  \leftrightarrow {j^{k_s}_{n_{k_s}}}\xrightarrow{} i$$ connecting $u$ to $i$ in $P$ is \emph{increasing} if for $x<y$ we have $k_x \leq k_y$ (in the statement of the theorem this property was defined for the paths starting from $j$); the empty (stationary) path connecting $i$ to $i$ is also called increasing.

Let us now suppose that $u={j^k_l}$ is a vertex in $p_k$ (note that $ u\ne j $). 

\noindent
Claim: The coefficient of $\beta_u$ in $\beta$ is $\sqrt{d_i/d_u}$ times the sum of the square-roots of the weights of the increasing paths starting from $u$ and ending in $i$. 

To prove the claim, let us denote
$$T=t_{j^k_{l+1}}  \cdots  t_{j^k_{n_k}}t^{(k+1)}...t^{(r)}=t_{j^k_{l+1}}  \cdots  t_{j^k_{n_k}} (t_{j^{k+1}_1}  \cdots  t_{j^{k+1}_{n_{k+1}}})...(t_{j^r_1}  \cdots  t_{j^r_{n_r}})$$ 
Let us note that $\beta=t^{(1)}...t^{(k-1)}t_{j^k_{1}}  \cdots  t_{j^k_{l}}T(\beta_i)$.
Let us also note that the coefficient of $\beta_u$ in $\beta$ is equal to the coefficient of $\beta_u$ in $\delta:=t_{j^k_l}T (\beta_i)$ 
(as all $j^y_x$ are distinct by (*)), so to prove the claim it is enough to show that the coefficient of $\beta_u$ in 
$\delta$ is as in the statement of the claim. 
We will obtain this by an inductive argument on the number of reflections in $T$. For this purpose, let $\gamma:=T (\beta_i)$, so $\delta=t_u(\gamma)$. Let us assume that 

(**) each coefficient of $\beta_v$ in $\gamma$ is as in the statement of the claim, i.e. $\gamma=T (\beta_i)=\sum\sqrt{d_i/d_v} c_v\beta_v$, where
$c_v$ is equal to the sum of the square-roots of the weights of the increasing path starting from $v$ and ending in $i$, with $ v\ne j $ in $ P $.

Note that $c_u=0$ because $ t_u $ is not used in $ T $. (Note that $ t_j $ is not used in $ T $ either.)
Let us note that $\delta=t_u(\gamma)=\sum \sqrt{d_i/d_v}c_vt_u(\beta_v)$; also $A_{u,v}=-\sqrt{d_v/d_u}\sqrt{w}$ (Lemma \ref{lem:lemma2}), where $w$ is the weight of the edge $e$ connecting $u$ to $v$ (it is zero if there is no such edge), so $t_u(\beta_v)=\beta_v+A_{u,v}\beta_u=v+\sqrt{d_v/d_u}\sqrt{w}\beta_u$ (note that $ A_{u,v}<0 $ by Lemma~\ref{lem:lemma2} as $ \{u,v\}\ne \{i,j\} $).
Then the coefficient of $\beta_u$ in $\delta$ is equal to $\sum \sqrt{d_i/d_v}c_v\sqrt{d_v/d_u}\sqrt{w}=\sum \sqrt{d_i/d_u}\sqrt{w}c_v$, where each summand corresponds to the increasing paths obtained by composing $e$ with the paths for $v$ and $\sqrt{w}c_v$ is the square root of the weight of such a path;  they are all of the increasing paths from $u$. 
Then by induction on the number of reflections in $T$, the coefficient of $\beta_u$ in $\delta$, so in $\beta$, is equal to the number stated in the claim.

To show that the relations in the statement of the theorem hold, we will show that $ <\beta_j,{\beta}^{\vee}><\beta,{\beta}^{\vee}_j> = x$ as in the statement of the theorem  (Lemma \ref{lem:edge2}). For this, let us recall that $<\beta_j,{\beta}^{\vee}>= \dfrac{2(\beta,\beta_j)}{(\beta,\beta)} $ and $<\beta,{\beta}^{\vee}_j> =\dfrac{2(\beta,\beta_j)}{(\beta_j,\beta_j)}$ where $ ( , ) $ is the invariant symmetric bilinear form defined by $A $ on the root lattice (see the beginning of this section). 

To compute $ (\beta,\beta_j)=(\beta_j,\beta) $, 
recall that $\beta=\sum\sqrt{d_i/d_u} c_u\beta_u$, where $ c_u $ is the sum of the square roots of the weights of the increasing paths starting from $u$ and ending in $i$ (by the claim above), $u\ne j  $. 
If there is an edge between $ j$  and $u $, we denote its weight by $ w_u $; otherwise we take $ w_u=0 $. 
Let us note that  $ A_{j,u}=-\sqrt{d_u/d_j}\sqrt{w}$ and $ A_{j,i}=\sqrt{d_i/d_j}\sqrt{a}$ by Lemma~\ref{lem:lemma2} (as $ \{u,v\}\ne \{i,j\} $),  so
$(\beta_j,\beta)=\sum\sqrt{d_i/d_u} c_u(\beta_j,\beta_u)=\sqrt{d_i/d_i} c_id_j\sqrt{d_i/d_j}\sqrt{a}-\sum\sqrt{d_i/d_v}c_v\sqrt{d_j}\sqrt{d_v}\sqrt{w_v}$ where the sum is over all $ v\ne i,j $. Let us note that $ c_i=1 $, so  $(\beta_j,\beta) =\sqrt{d_j}\sqrt{d_i}(\sqrt{a}-\sum c_v\sqrt{w_v})$ where the sum is over over all $ v\ne i,j $. Note that each non-zero summand $ c_v\sqrt{w_v} $ is the sum of the square root of the weights of the increasing paths obtained by composing the edge $j \longrightarrow v$ with the paths for $v$ and these are all of the increasing paths from $j$, so 
 $\sum c_v\sqrt{w_v} =\sum \sqrt{w(q)} $ is the sum of the square roots of the weights of the increasing paths $ q $ starting from $j$ and ending in $i$. 
Thus the order of the element in the statement is  $$ <\beta_j,{\beta}^{\vee}><\beta,{\beta}^{\vee}_j> =\dfrac{2(\beta,\beta_j)}{(\beta,\beta)}\dfrac{2(\beta,\beta_j)}{(\beta_j,\beta_j)}=\dfrac{2(\beta,\beta_j)}{2d_i}\dfrac{2(\beta,\beta_j)}{2d_j}=
$$

$$=
\dfrac{(\beta,\beta_j)^2}{d_id_j}=\dfrac{(\sqrt{d_j}\sqrt{d_i}(\sqrt{a}-\sum \sqrt{w(q)}))^2}{d_id_j}=(\sqrt{a}-\sum \sqrt{w(q)})^2$$

\noindent
This completes the proof of the theorem.
\vskip 0.3cm

\noindent

\noindent
{\bf Proof of Theorem~\ref{th:multiple-edge}.} 
Let us suppose that $B'$ is mutation-equivalent to $B$ and $\Gamma(B')$ contains an edge $\{i,j\}$. Let $t_i$ and $t_j$ be the reflections corresponding to the companion basis elements $\beta_i,\beta_j$ (the companion basis can be obtained from the basis of simple roots by mutating accordingly). 
Let $m_{ij}$ be the order of $t_it_j$. Since the vertices $i$ and $j$ are connected in $\Gamma(B')$, we have $m_{ij}\ne 2$ (Lemma~\ref{lem:edge2}).
On the other hand, by Proposition~\ref{prop:multiple-edge-cox}, $m_{ij}$ is either $2$ or $\infty$; so $m_{ij}=\infty$. This completes the proof. 

\noindent
{\bf Proof of Theorem~\ref{th:invariance-affine}. } Let us first note that the existence of  an admissible companion for $ B $ has been shown in \cite{S3}.  Here we prove the theorem using a direct check. Let us show this for $ \tilde{D}_n  $. For this, suppose that $ \Gamma(B) $ contains a subdiagram $ X $ as in \cite[Table 4.1]{FTu}, labeled to be of type $ \tilde{D}_n $, with the same labeling of the vertices.
Applying sign changes if necessary, as in Lemma \ref{lem:edge2}, we may assume that $ A_{3,n+1}>0 $ and $ A_{i,j} < 0$ for all $ \{i,j\} \ne \{3,n+1\}$ in $ X $. 
Let us also note that $ t_1t_2t_3t_2t_1$ is the reflection with respect to the root $ \gamma=t_1t_2(\beta_3 )=\beta_1+\beta_2+\beta_3 $ and 
$ t_4t_5...t_nt_{n+1}t_n...t_5t_4$ is the reflection with respect to the root $ \delta=t_4t_5...t_n(\beta_{n+1} )=\beta_4+\beta_5+...+\beta_{n+1} $. An easy calculation shows that
 $ (\gamma,\delta)=0$, so $( t_1t_2t_3t_2t_1 t_4t_5...t_nt_{n+1}t_n...t_5t_4)^2=e$ as in \cite[Table 4.1]{FTu}. The other relations could be checked similarly.

\end{document}